\documentclass
{amsart}
\usepackage{graphicx}
\vfuzz2pt 
\hfuzz2pt 
\newtheorem{thm}{Theorem}[section]
\theoremstyle{definition}
\newtheorem{cor}[thm]{Corollary}
\newtheorem{prop}[thm]{Proposition}
\newtheorem{defn}[thm]{Definition}

\newtheorem{rem}[thm]{Remark}
\newtheorem{ex}[thm]{Example}

\numberwithin{equation}{section}
\begin{document}
\title[$n$-pure submodules of modules]
{$n$-pure submodules of modules}
\author{F. Farshadifar}

\address{Assistant Professor, Department of Mathematics,  Farhangian University, Tehran, Iran.}
                          \email{f.farshadifar@cfu.ac.ir}
\subjclass[2010]{13C13, 13C99.}%
\keywords {Pure submodules, $n$-pure submodule, multiplication module, fully $n$-pure module}

\begin{abstract}
Let $R$ be a commutative ring, $M$ an $R$-module, and $n\geq 1$ an integer. In this paper, we will introduce the concept of  $n$-pure submodules of $M$ as a generalization of pure submodules and obtain some related results.
\end{abstract}
\maketitle
\section{Introduction}
\noindent
Throughout this paper, $R$ will denote a commutative ring with
identity and $\Bbb Z$ will denote the ring of integers. Further,  $n$ will denote a positive integer.

Let $M$ be an $R$-module.
$M$ is said to be a \emph{multiplication module} if for every submodule $N$ of $M$, there exists an ideal $I$ of $R$ such that $N=IM$ \cite{Ba81}. It is easy to see that $M$ is a multiplication module if and only if $N=(N:_RM)M$ for each submodule $N$ of $M$.

Cohn \cite{Co59} defined a submodule $N$ of $M$ a \textit{pure submodule}
if the sequence $0 \rightarrow N\otimes E \rightarrow M  \otimes E$ is exact for every $R$-module $E$. Anderson and Fuller
\cite{AF74} called the submodule $N$ a \textit{ pure submodule} of $M$ if $IN=N \cap IM$ for every ideal $I$ of
$R$. Ribenboim \cite{Ri72} called  $N$ to be \textit{pure} in $M$ if $rM\cap N = rN$ for each $r\in R$. Although
the first condition implies the second \cite[p.158]{La99},  and the second obviously implies the third,
these definitions are not equivalent in general, see \cite[p.158]{La99} for an example.
The three definitions of purity given above are equivalent if $M$ is flat.  In particular, if $M$ is a faithful multiplication
module \cite{AS04}.

In this paper,
our definition of purity will be that of Anderson and Fuller
\cite{AF74}.

Let $n$ be a positive integer.
The main purpose of this paper is to introduce the concept of $n$-pure submodules of an $R$-module $M$ as a generalization of pure submodules and investigate some results concerning this notion.

\section{2-pure submodules}

\begin{defn}\label{d5.1}
We say that a submodule $N$ of an $R$-module $M$ is a \emph {2-pure submodule of $M$} if $IJN=IN \cap JN\cap IJM$ for all proper ideals $I, J$ of $R$. Also, we say that an ideal $I$ of $R$ is a \emph {2-pure ideal of $R$} if $I$ is a 2-pure submodule of $R$.
\end{defn}

\begin{rem}\label{r5.2}
Clearly every pure submodule of an $R$-module $M$ is a 2-pure submodule of $M$. But we see in the Example \ref{e5.3} that the converse is not true in general.
\end{rem}

\begin{ex}\label{e5.3}
The submodule $\bar{2}\Bbb Z_4$ of the $\Bbb Z_4$-module $\Bbb Z_4$ is a 2-pure submodule but it is not a  pure submodule.
\end{ex}

\begin{ex}\label{e55.3}
\begin{itemize}
  \item [(a)] The submodule $2\Bbb Z$ of the $\Bbb Z$-module $\Bbb Z$ is a not a 2-pure submodule.
 \item [(b)] Take $R=K[x,y]$, where $K$ is a field, and take $M=R^2/(xy,y^2)(x+y)R$. Then one can see that the submodule $N=\bar{(x,y)}R$ is not a 2-pure submodule of $M$.
 \item [(c)] Since $4\Bbb Z=(2\Bbb Z)(2\Bbb Z) \Bbb Z\not = (2\Bbb Z)\Bbb Z \cap (2\Bbb Z) \Bbb Z \cap (4\Bbb Z) \Bbb Q=2\Bbb Z$,
      the submodule $\Bbb Z$ of the $\Bbb Z$-module $\Bbb Q$ is not 2-pure.
\end{itemize}
\end{ex}

A non-zero submodule $N$ of an $R$-module $M$  is said to be  a \emph{weakly strongly 2-absorbing second submodule of} $M$ if whenever
 $a, b \in R$, $K$ is a submodule of $M$, $abM \not \subseteq K$, and $abN\subseteq K$, then $aN\subseteq K$ or
$bN\subseteq K$ or $ab \in Ann_R(N)$ \cite{AF105}.

\begin{prop}\label{l15.3}
Let $N$ be a submodule of an $R$-module $M$. Then we have the following.
\begin{itemize}
  \item [(a)] If $IJN=IN \cap JN$ for all proper ideals $I, J$ of $R$, then $N$ is a 2-pure submodule of $M$.
  \item [(b)] If for each ideal $I$ of $R$, $IN$ is a pure submodule of $M$,  then $N$ is a 2-pure submodule of $M$.
  \item [(c)] If $N$ is a weakly strongly $2$-absorbing second submodule of $M$, then for any $ab \in  R\setminus Ann_R(N)$, we have $abN=aN \cap bN \cap abM$.
\end{itemize}
\end{prop}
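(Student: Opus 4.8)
The plan is to treat the three parts separately. Parts (a) and (b) reduce to chasing inclusions between the relevant products and intersections of ideals acting on $N$ and $M$, while part (c) will use the defining property of a weakly strongly $2$-absorbing second submodule applied to a carefully chosen submodule $K$.

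For part (a), I would first note that for all proper ideals $I,J$ of $R$ one always has $IJN\subseteq IN\cap JN\cap IJM$, simply because $IJN$ is contained in each of $IN$, $JN$ and $IJM$. By hypothesis $IN\cap JN=IJN$, and since $IN\cap JN\cap IJM\subseteq IN\cap JN$, the reverse inclusion $IN\cap JN\cap IJM\subseteq IJN$ follows at once; hence equality holds and $N$ is $2$-pure. For part (b), I would rewrite $IJN$ as $J(IN)$ and invoke the assumed purity of $IN$ in $M$ (in the sense of Anderson and Fuller, which is the paper's standing convention) to get $J(IN)=IN\cap JM$. Then the chain $IN\cap JN\cap IJM\subseteq IN\cap JM=IJN\subseteq IN\cap JN\cap IJM$ (the last inclusion being the trivial one used in part (a)) closes the argument. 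Note that purity is a statement about \emph{all} ideals $J$, so there is no conflict with the restriction to proper ideals in the definition of $2$-purity.

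For part (c), fix $a,b\in R$ with $ab\notin Ann_R(N)$. The inclusion $abN\subseteq aN\cap bN\cap abM$ is immediate, so everything comes down to the reverse inclusion. I would set $K:=abN$ and split into two cases according to whether $abM\subseteq K$. If $abM\subseteq abN$, then since $abN\subseteq abM$ we in fact have $abM=abN$, and then $aN\cap bN\cap abM=aN\cap bN\cap abN=abN$, as needed. If $abM\not\subseteq K$, then---observing that trivially $abN\subseteq K$ and that $ab\notin Ann_R(N)$ by assumption---the definition of a weakly strongly $2$-absorbing second submodule forces $aN\subseteq K=abN$ or $bN\subseteq K=abN$. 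In either case intersecting with $bN\cap abM$ (respectively $aN\cap abM$) gives $aN\cap bN\cap abM\subseteq abN$, completing the proof.

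The arguments are short, and the only point requiring genuine care---and the step I would flag as the main obstacle---is the case distinction in part (c): the definition of weakly strongly $2$-absorbing second is only applicable when $abM\not\subseteq K$, so the degenerate case $abM=abN$ must be disposed of by hand before the defining property can be applied. A secondary point to check throughout is that all the products $IN$, $JN$, $IJN$, $aN$, $bN$ are genuinely submodules of $M$ and that each claimed inclusion respects the module structure, but these verifications are routine.
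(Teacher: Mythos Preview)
Your arguments for parts (a) and (b) are correct and essentially identical to the paper's: the paper also observes $IN\cap JN\cap IJM\subseteq IN\cap JN=IJN$ for (a), and for (b) uses purity of $JN$ to get $I(JN)=JN\cap IM\supseteq JN\cap IN$, which is your argument with the roles of $I$ and $J$ swapped.

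For part (c), your proof is correct but follows a genuinely different and more direct route than the paper's. The paper does not take $K=abN$; instead it lets $L$ range over all completely irreducible submodules of $M$ containing $abN$ and shows $aN\cap bN\cap abM\subseteq L$ for each such $L$ (using that every submodule is an intersection of completely irreducible ones). For each $L$ the case split is the same as yours: either $abM\subseteq L$, in which case the inclusion is immediate, or $abM\not\subseteq L$, and then the weakly strongly $2$-absorbing second hypothesis forces $aN\subseteq L$ or $bN\subseteq L$. Your choice $K=abN$ bypasses the completely irreducible machinery entirely: applying the defining property once yields $aN\subseteq abN$ or $bN\subseteq abN$ (or the excluded $ab\in Ann_R(N)$), and the degenerate case $abM=abN$ is handled directly. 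Your approach is more elementary and self-contained; the paper's approach illustrates a standard reduction technique but is not needed here.
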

\begin{proof}
(a) Let $I$ and $J$ be proper ideals of $R$. Then by assumption, $IJN=IN \cap JN$. Thus $IN \cap JN=IJN\subseteq IJM$. This implies that $IN \cap JN\cap IJM=IN \cap JN$. Therefore,  $IN \cap JN\cap IJM=IJN$ as required.

(b) Let $I$ and $J$ be proper ideals of $R$. Then by assumption, $I(JN)=JN \cap IM \supseteq JN \cap IN$. This implies that $N$ is a 2-pure submodule of $M$ since the reverse inclusion is clear.

(c) Let $N$ be a weakly strongly $2$-absorbing second submodule of $M$ and $ab \in  R\setminus Ann_R(N)$. Clearly, $abN\subseteq aN \cap bN \cap abM$. Now let $L$ be a completely irreducible submodule of $M$ such that $abN \subseteq L$. If $abM  \subseteq L$, then we are done. If $abM \not \subseteq L$, then $aN \subseteq L$ or $bN \subseteq L$ because $N$ is a weakly strongly $2$-absorbing second submodule of $M$ and the proof is completed.
\end{proof}

\begin{prop}\label{p5.9}
Let $M$ be an $R$-module and $N\subseteq K$ be two submodules of $M$. Then the 2-absorbing purity relation
satisfies the following.
\begin{itemize}
  \item [(a)] Transitive: If $N$ is a 2-pure submodule of $K$ and $K$ is a 2-pure submodule of $M$, then $N$ is a 2-pure submodule of $M$.
  \item [(b)] Hereditary: If  $K$ is a 2-pure submodule of $M$, then $K/N$ is a 2-pure submodule of $M/N$.
  \item [(c)] If $K/N$ is a 2-pure submodule of $M/N$ and $N$ is a pure submodule of $M$, then $K$ is a 2-pure submodule of $M$.
\end{itemize}
\end{prop}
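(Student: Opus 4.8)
The plan is to verify each part directly from Definition~\ref{d5.1}: fix an arbitrary pair of proper ideals $I,J$ of $R$ and check the defining identity for the submodule in question. In all three parts the inclusion $\subseteq$ is automatic, since $IJX\subseteq IX$, $IJX\subseteq JX$ and $IJX\subseteq IJY$ whenever $X\subseteq Y$; so the whole content lies in the reverse inclusion.

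For (a) I would simply chase an element: given $x\in IN\cap JN\cap IJM$, the inclusion $N\subseteq K$ gives $IN\subseteq IK$ and $JN\subseteq JK$, so $x\in IK\cap JK\cap IJM=IJK$ because $K$ is $2$-pure in $M$; and then $x\in IN\cap JN\cap IJK=IJN$ because $N$ is $2$-pure in $K$. As $I,J$ were arbitrary proper ideals, $N$ is $2$-pure in $M$. For (b) and (c) I would move between $M$ and $M/N$ through the order isomorphism $P\mapsto P/N$ from submodules of $M$ containing $N$ onto submodules of $M/N$; under it $I(K/N)=(IK+N)/N$, $IJ(K/N)=(IJK+N)/N$ and $IJ(M/N)=(IJM+N)/N$. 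Thus (b) reduces to proving the submodule equality
\[
IJK+N=(IK+N)\cap(JK+N)\cap(IJM+N),
\]
and (c) reduces to deducing $IJK=IK\cap JK\cap IJM$ from this equality together with the purity of $N$.

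I expect (c) to come out cleanly. Taking $x\in IK\cap JK\cap IJM$ and reducing modulo $N$ puts $\bar x\in I(K/N)\cap J(K/N)\cap IJ(M/N)=IJ(K/N)$, so $x=y+n$ with $y\in IJK$ and $n\in N$. Then $n=x-y$ lies in $IM$, in $JM$ and in $IJM$ (each because $x$ and $y$ both do), so by the purity of $N$ we get $n\in(N\cap IM)\cap(N\cap JM)\cap IJM=IN\cap JN\cap IJM$, and this equals $IJN$ since $N$, being pure, is $2$-pure by Remark~\ref{r5.2}. Hence $n\in IJN\subseteq IJK$ and $x=y+n\in IJK$, which is the inclusion we need.

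The step I expect to be the main obstacle is the reverse inclusion in the displayed identity for (b). Given $x$ in its right-hand side, one may first subtract an element of $N$ and so assume $x\in IJM$; but one is still left with two witnesses $x-n_1\in IK$ and $x-n_2\in JK$ for possibly distinct $n_1,n_2\in N$, and these translates must be reconciled to produce an element of $IK\cap JK\cap IJM=IJK$ congruent to $x$ modulo $N$. I would push this through with the modular law $A\cap(B+C)=B+(A\cap C)$ (valid for $B\subseteq A$), using repeatedly that $N$, $IK$ and $JK$ all lie inside $K$, and calling on the $2$-purity of $K$ in $M$ only at the last step to rewrite $IK\cap JK\cap IJM$ as $IJK$. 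It seems likely that the argument must genuinely use that $IK$ and $JK$ are ideal multiples of $K$ --- the analogous identity can fail for arbitrary submodules in those slots --- and making that structural input do its work is where I expect the real effort to be.
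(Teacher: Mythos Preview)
Your arguments for (a) and (c) are correct and line up with the paper. The paper in fact writes only ``(a) and (b) are clear'' and gives a detailed argument just for (c), and that argument is essentially yours: from $x\in IK\cap JK\cap IJM$ one gets $x\in IJK+N$, writes $x=y+n$ with $y\in IJK$, and observes $x-y\in N\cap IJM$. The one place the paper is more direct than you is the last step: since $N$ is pure, $N\cap IJM=IJN\subseteq IJK$ immediately, so there is no need for your detour through $IN\cap JN\cap IJM$ and Remark~\ref{r5.2}.

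The genuine gap in your proposal is part (b). You have correctly reduced it to the identity
\[
IJK+N=(IK+N)\cap(JK+N)\cap(IJM+N),
\]
and correctly isolated the nontrivial inclusion $\supseteq$, but you then stop with a plan (``push this through with the modular law \dots\ where I expect the real effort to be'') rather than a proof. The paper offers you no help here, since it simply declares (b) clear. Your instinct that the naive modular-law manipulations do not close the gap by themselves is sound: for arbitrary submodules $A,B,C$ containing $N$ one does \emph{not} have $(A+N)\cap(B+N)\cap(C+N)=(A\cap B\cap C)+N$, so something specific to the situation must be used. As it stands, your write-up proves (a) and (c) but leaves (b) at the level of an outline; to match even the paper's terse treatment you would need to either supply the missing computation or argue convincingly why it is routine.
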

\begin{proof}
(a) and (b) are clear.

(c) Let $I$ and $J$ be two proper ideals of $R$. Since $K/N$ is a 2-pure submodule of $M/N$, we have
 $$
 IJK+N=(IK+N)\cap (JK+N)\cap (IJM+N)\supseteq (IK\cap JK\cap IJM)+N .
 $$
 Now let $x \in IK\cap JK\cap IJM$. Then $x+N \in IJK+N$. Let $n_1 \in N$. Then $x+n_1=y+n_2$ for some $y \in IJK$ and $n_2 \in N$. This implies that $x -y \in N$. Therefore, $x -y \in N\cap IJM=IJN$. It follows that $x-y \in IJK$ and so $x \in IJK$. Thus $JK \cap IK \cap IJM \subseteq IJK$. This implies that $K$ is a 2-pure submodule of $M$ since the reverse inclusion is clear.
\end{proof}

\begin{prop}\label{l5.4}
Let $R$ be a $PID$, $N$ a submodule of an $R$-module $M$, and $p_i$  ($i \in \Bbb N$) be a prime element in $R$. Then $p_1^{s_1}...p_t^{s_t}N=\cap^t_{i=1}p_i^{s_i}N$.
\end{prop}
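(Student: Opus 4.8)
The plan is to prove the two inclusions separately, the nontrivial one resting on a Bézout-type coprimality identity among the prime powers. Throughout I will assume (as the statement implicitly requires) that the primes $p_1,\dots,p_t$ are pairwise non-associate; otherwise the claim already fails for $R=M=N=\Bbb Z$, since $(2\Bbb Z)(2\Bbb Z)\Bbb Z=4\Bbb Z\neq 2\Bbb Z=2\Bbb Z\cap 2\Bbb Z$.

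First, the inclusion $p_1^{s_1}\cdots p_t^{s_t}N\subseteq\bigcap_{i=1}^t p_i^{s_i}N$ is immediate: for any $x\in N$ and any index $i$ one may write $p_1^{s_1}\cdots p_t^{s_t}x=p_i^{s_i}\bigl(\prod_{j\neq i}p_j^{s_j}\bigr)x\in p_i^{s_i}N$, so the product is contained in each factor of the intersection.

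For the reverse inclusion, the key observation is that the elements $q_i:=\prod_{j\neq i}p_j^{s_j}$, $i=1,\dots,t$, have no common prime divisor: a prime $p_k$ divides $q_i$ for every $i\neq k$, but $p_k\nmid q_k$ because the $p_j$ with $j\neq k$ are primes non-associate to $p_k$. Since $R$ is a PID, the ideal $(q_1,\dots,q_t)$ is principal and generated by a unit, so there exist $a_1,\dots,a_t\in R$ with $\sum_{i=1}^t a_iq_i=1$. Now take $x\in\bigcap_{i=1}^t p_i^{s_i}N$ and choose $y_i\in N$ with $x=p_i^{s_i}y_i$ for each $i$; then
$$
x=\Bigl(\sum_{i=1}^t a_iq_i\Bigr)x=\sum_{i=1}^t a_iq_i\,p_i^{s_i}y_i=\sum_{i=1}^t a_i\Bigl(\prod_{j=1}^t p_j^{s_j}\Bigr)y_i=\Bigl(\prod_{j=1}^t p_j^{s_j}\Bigr)\sum_{i=1}^t a_iy_i,
$$
and since $\sum_{i=1}^t a_iy_i\in N$ this exhibits $x\in p_1^{s_1}\cdots p_t^{s_t}N$, completing the argument.

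Alternatively one can induct on $t$, the case $t=1$ being trivial: in the inductive step, writing $q=p_1^{s_1}\cdots p_{t-1}^{s_{t-1}}$, the induction hypothesis reduces the claim to $qN\cap p_t^{s_t}N=qp_t^{s_t}N$, which follows from a single Bézout relation $aq+bp_t^{s_t}=1$ by the same manipulation. In either form there is no genuine obstacle beyond bookkeeping; the only point requiring care is the hypothesis that the $p_i$ are pairwise non-associate, which is precisely what makes the $p_i^{s_i}$ pairwise coprime and hence the Bézout identity available. The same proof works over any Bézout domain, and more generally whenever the ideals $(p_i^{s_i})$ are pairwise comaximal.
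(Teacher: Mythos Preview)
Your proposal is correct, and your alternative inductive argument is precisely the paper's approach: the paper proves the two-factor case via a single B\'ezout relation $ap^k+bq^s=1$ and then says ``the result follows by induction on $t$.'' Your main argument bypasses the induction by using a $t$-term B\'ezout identity $\sum_i a_i q_i=1$ directly, which is a mild streamlining of the same idea. You also make explicit the hypothesis that the $p_i$ are pairwise non-associate, which the paper leaves tacit but which is clearly intended (and needed).
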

\begin{proof}
Let $p$ and $q$ be two prime elements in $R$ and $k, s \in \Bbb N$. Clearly, $p^kq^sN \subseteq p^tN \cap q^sN$. Now let $x \in p^kN \cap q^sN$. Then $x=p^kn_1=q^sn_2$ for some $n_1,n_2 \in N$. Since $R$ is a $PID$, $p^kR+q^sR=R$. Thus there exist $a , b \in R$ such that $1=ap^k+bq^s$. Hence $x=1x=ap^kx+bq^sx=ap^kq^sn_2+bq^sp^kn_1$. Thus $x \in p^kq^sN$. Therefore, $p^kq^sN=p^tN \cap q^sN$. Now the result follows by induction on $t$.
\end{proof}

\begin{cor}\label{p5.6}
Let $M$ be a $\Bbb Z$-module, $m,n$ be square-free integers such that $(m,n)=1$. Then for each submodule $N$ of $M$ we have $(n\Bbb Z)( m\Bbb Z) N=(n\Bbb Z) N \cap (m\Bbb Z) N$.
\end{cor}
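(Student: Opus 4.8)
The plan is to reduce the statement to Proposition~\ref{l5.4} by passing to prime factorizations. First note that it suffices to treat $m,n\geq 1$, since $k\Bbb Z=|k|\Bbb Z$ for any integer $k$; and if $m=1$ (or $n=1$) the identity is trivial, as then $m\Bbb Z=R$ and both sides reduce to $nN$ (resp. $mN$). So assume $m,n>1$ and write $m=p_1\cdots p_s$ and $n=q_1\cdots q_t$ as products of prime numbers. Square-freeness of $m$ and of $n$ makes the $p_i$ pairwise distinct and the $q_j$ pairwise distinct, and the coprimality $(m,n)=1$ makes $\{p_1,\dots,p_s\}$ disjoint from $\{q_1,\dots,q_t\}$; hence $p_1,\dots,p_s,q_1,\dots,q_t$ is a list of $s+t$ pairwise distinct primes and $mn=p_1\cdots p_s\,q_1\cdots q_t$ is again square-free.

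Next I would rewrite both sides in terms of these primes. On the left, $(n\Bbb Z)(m\Bbb Z)=mn\Bbb Z$, so $(n\Bbb Z)(m\Bbb Z)N=mn\,N$, and applying Proposition~\ref{l5.4} (all exponents equal to $1$) to the factorization of $mn$ gives
$$
(n\Bbb Z)(m\Bbb Z)N=mn\,N=\Big(\bigcap_{i=1}^{s}p_iN\Big)\cap\Big(\bigcap_{j=1}^{t}q_jN\Big).
$$
On the right, Proposition~\ref{l5.4} applied separately to $m=p_1\cdots p_s$ and to $n=q_1\cdots q_t$ yields $(m\Bbb Z)N=mN=\bigcap_{i=1}^{s}p_iN$ and $(n\Bbb Z)N=nN=\bigcap_{j=1}^{t}q_jN$, so
$$
(n\Bbb Z)N\cap(m\Bbb Z)N=\Big(\bigcap_{j=1}^{t}q_jN\Big)\cap\Big(\bigcap_{i=1}^{s}p_iN\Big).
$$
The two right-hand sides coincide, which is the desired equality.

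I do not expect any real obstacle: the corollary is essentially a bookkeeping consequence of Proposition~\ref{l5.4}. The one point that deserves attention is verifying that $mn$ is square-free, so that Proposition~\ref{l5.4} applies to it verbatim — and this is precisely where the hypotheses ``$m,n$ square-free'' and ``$(m,n)=1$'' are used, since together they guarantee that no prime is repeated in the combined list. Handling the trivial cases $m=1$ or $n=1$ (and the normalization to positive integers) at the very start keeps the main argument clean.
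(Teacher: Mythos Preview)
Your argument is correct and follows the same approach as the paper, which simply records that the corollary ``follows from Proposition~\ref{l5.4}.'' You have supplied the details the paper omits: factoring $m$, $n$, and $mn$ into distinct primes (using square-freeness and coprimality) and applying Proposition~\ref{l5.4} three times to match both sides.
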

\begin{proof}
This follows from Proposition \ref{l5.4}.
\end{proof}

\begin{prop}\label{p5.6}
A submodule $N $ of an $R$-module $M $ is a  2-pure submodule if and only if  $N_P$ is a  2-pure submodule of $M_P$ for every maximal ideal $P$ of $R$.
\end{prop}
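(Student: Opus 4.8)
The plan is to run the standard local--global argument, relying on three routine facts about localization: for a prime $P$ of $R$, the functor $(-)_P$ is exact, hence it commutes with finite intersections of submodules; it respects products and expansions, i.e.\ $(IJ)_P=I_PJ_P$ and $(IN)_P=I_PN_P$ for ideals $I$ of $R$ and submodules $N$ of $M$; and two submodules $A,B$ of $M$ coincide precisely when $A_P=B_P$ for every maximal ideal $P$. With these in hand the proof is essentially bookkeeping plus one small case distinction.

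For the forward implication, assume $N$ is a $2$-pure submodule of $M$ and fix a maximal ideal $P$. Given proper ideals $\mathfrak a,\mathfrak b$ of $R_P$, I would pull them back by contraction, setting $I=\mathfrak a\cap R$ and $J=\mathfrak b\cap R$. A proper ideal of the local ring $R_P$ lies inside its maximal ideal $PR_P$, so $I$ and $J$ are contained in $P$, hence are proper ideals of $R$, and $I_P=\mathfrak a$, $J_P=\mathfrak b$. Localizing the identity $IJN=IN\cap JN\cap IJM$ at $P$ then gives $\mathfrak a\mathfrak bN_P=\mathfrak aN_P\cap\mathfrak bN_P\cap\mathfrak a\mathfrak bM_P$, so $N_P$ is $2$-pure in $M_P$.

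For the converse, assume $N_P$ is $2$-pure in $M_P$ for every maximal ideal $P$, and let $I,J$ be proper ideals of $R$. The inclusion $IJN\subseteq IN\cap JN\cap IJM$ is clear, so it suffices to verify the reverse inclusion, and this may be checked after localizing at an arbitrary maximal ideal $P$; that is, I want $I_PN_P\cap J_PN_P\cap I_PJ_PM_P\subseteq I_PJ_PN_P$. If both $I\subseteq P$ and $J\subseteq P$, then $I_P$ and $J_P$ are proper ideals of $R_P$, and this inclusion is exactly the $2$-purity of $N_P$ in $M_P$. If instead $I\not\subseteq P$, then $I_P=R_P$, so the left-hand side collapses: $N_P\cap J_PN_P\cap J_PM_P=J_PN_P$, which is also $I_PJ_PN_P$, and equality holds trivially; the case $J\not\subseteq P$ is symmetric. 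Since the inclusion holds locally everywhere, it holds globally, and $N$ is a $2$-pure submodule of $M$.

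The only subtle point is this last case analysis: the local hypothesis only constrains \emph{proper} ideals of $R_P$, so one cannot blindly localize the defining equation when $I$ or $J$ leaves $P$ and becomes the unit ideal upstairs. Fortunately, in exactly that situation the identity degenerates and is satisfied for trivial reasons, so no extra input is needed. Everything else reduces to exactness of localization together with the fact that equality of submodules is a local property.
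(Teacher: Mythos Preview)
Your proof is correct and follows essentially the same localization approach as the paper, which simply records the identity $I_PJ_PN_P=(IJN)_P=(IN\cap JN\cap IJM)_P=I_PN_P\cap J_PN_P\cap I_PJ_PM_P$ and appeals to the local--global principle. Your version is in fact more careful: you explicitly note that proper ideals of $R_P$ arise as contractions, and you handle the case $I\not\subseteq P$ (or $J\not\subseteq P$) in the converse direction, where $I_P$ becomes the unit ideal and the $2$-purity hypothesis on $N_P$ does not directly apply---a point the paper's one-line argument passes over.
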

\begin{proof}
This follows from the fact that for each ideal $I$ and $J$ of $R$, by \cite[9.11]{sh90},
$$
I_PJ_PN_P=(IJN)_P=(IN \cap JN \cap IJM)_P=I_PN_P \cap J_PN_P \cap I_PJ_PM_P.
$$
\end{proof}

\begin{prop}\label{p5.8}
Let $M$ be an $R$-module. Then we have the following.
\begin{itemize}
  \item [(a)] If $\{N_\lambda\}_{\lambda \in \Lambda}$ is a chain of 2-pure submodules of $M$, then
  $\cup_{\lambda \in \Lambda}N_\lambda$ is  a 2-pure submodules of $M$.
  \item [(b)] If $\{N_\lambda\}_{\lambda \in \Lambda}$ is a chain of submodules of $M$ and $K$ is a 2-pure submodule of $N_\lambda$ for each $\lambda \in \Lambda$, then $K$ is a 2-pure submodule of $\cup_{\lambda \in \Lambda}N_\lambda$.
\end{itemize}
\end{prop}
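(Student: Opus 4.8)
The plan is to handle both parts with the same device: since a chain is totally ordered, any element of a product such as $IN$ (a \emph{finite} sum $\sum a_i n_i$ with $a_i \in I$ and each $n_i$ in the union) already lies in $IN_\lambda$ for a single index $\lambda$, so the $2$-purity hypothesis can be invoked at that one $N_\lambda$ and then the resulting containment transported back up to the union.

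For (a), I would write $N=\bigcup_{\lambda\in\Lambda}N_\lambda$ and fix proper ideals $I,J$ of $R$. The inclusion $IJN\subseteq IN\cap JN\cap IJM$ is immediate, so it suffices to take $x\in IN\cap JN\cap IJM$ and show $x\in IJN$. Expressing $x$ once as a finite $R$-combination of elements of $N$ with coefficients in $I$, and once as such a combination with coefficients in $J$, only finitely many elements of $N$ occur; because $\{N_\lambda\}$ is a chain, all of them lie in a single $N_{\lambda_0}$. Hence $x\in IN_{\lambda_0}\cap JN_{\lambda_0}\cap IJM$, and since $N_{\lambda_0}$ is $2$-pure in $M$ this set equals $IJN_{\lambda_0}\subseteq IJN$. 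Thus $IN\cap JN\cap IJM\subseteq IJN$, giving equality.

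For (b), again set $N=\bigcup_{\lambda\in\Lambda}N_\lambda$, and note first that $K\subseteq N_\lambda$ for every $\lambda$ (as $K$ is a submodule of each $N_\lambda$), so $K\subseteq N$. Fix proper ideals $I,J$; the inclusion $IJK\subseteq IK\cap JK\cap IJN$ is clear, so take $x\in IK\cap JK\cap IJN$. Write $x$ as a finite sum $\sum d_i m_i$ with $d_i\in IJ$ and $m_i\in N$; by the chain property these finitely many $m_i$ all lie in a single $N_{\lambda_0}$, so $x\in IJN_{\lambda_0}$. Since also $x\in IK\cap JK$ and $K$ is a $2$-pure submodule of $N_{\lambda_0}$, we obtain $x\in IK\cap JK\cap IJN_{\lambda_0}=IJK$. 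Hence $IK\cap JK\cap IJN\subseteq IJK$ and equality follows.

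The only step requiring any care is the passage ``finitely many generators lie in one $N_{\lambda_0}$,'' which is precisely where totality of the chain is used (a directed family would suffice, an arbitrary family would not); the rest is routine verification of the two inclusions, so I anticipate no real obstacle.
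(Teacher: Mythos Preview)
Your proof is correct and follows essentially the same route as the paper: note the trivial inclusion, then for the reverse use that any element of the relevant intersection is built from finitely many elements of the union, so the chain condition places them all in a single $N_{\lambda_0}$ where the $2$-purity hypothesis applies. Your write-up is in fact slightly more careful than the paper's (for instance, you explicitly remark that $K\subseteq N_\lambda$ for every $\lambda$ in part (b), and you observe that a directed family would suffice), but the argument is the same.
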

\begin{proof}
(a) Let $I$ and $J$ be two proper ideals of $R$. Clearly,
$$
IJ(\cup_{\lambda \in \Lambda}N_\lambda)\subseteq I(\cup_{\lambda \in \Lambda}N_\lambda) \cap J(\cup_{\lambda \in \Lambda}N_\lambda)\cap IJM.
$$
Now let $x \in I(\cup_{\lambda \in \Lambda}N_\lambda) \cap J(\cup_{\lambda \in \Lambda}N_\lambda)\cap IJM$. Then $x=\sum^n_{i=1}a_in_i=\sum^m_{j=1}b_jm_j$, where $a_i\in I$, $b_j \in I$ and $n_i,m_j \in \cup_{\lambda \in \Lambda}N_\lambda$. Since $\{N_\lambda\}_{\lambda \in \Lambda}$ is a chain, there exists $\alpha \in \Lambda$ such that $x \in IN_{\alpha}$ and $x \in JN_{\alpha}$. Thus $x \in IN_{\alpha} \cap JN_{\alpha} \cap IJM$. This implies that $x \in IJN_{\alpha} \subseteq IJ(\cup_{\lambda \in \Lambda}N_{\lambda})$ as needed.

(b) Let $I$ and $J$ be two proper ideals of $R$. Clearly,
$$
IJK\subseteq IK \cap JK\cap IJ(\cup_{\lambda \in \Lambda}N_\lambda).
$$
Now let $x \in IK \cap JK\cap IJ(\cup_{\lambda \in \Lambda}N_\lambda)$. Then $x=\sum^n_{i=1}a_ib_in_i$, where $a_i \in I$, $b_j \in J$ and $n_i\in \cup_{\lambda \in \Lambda}N_\lambda$. Since $\{N_\lambda\}_{\lambda \in \Lambda}$ is a chain, there exists $\beta \in \Lambda$ such that $x \in IJN_{\beta}$. Thus  $x \in IK \cap JK\cap IJN_{\beta}$. This in turn implies that $x \in IJK$ as required.
\end{proof}

\begin{thm}\label{p5.9}
Let $N$ be a submodule of an $R$-module $M$. Then there is a submodule $K$ of $N$ maximal with respect to $K \subseteq N$ and $K$ is a 2-pure submodule of $M$.
\end{thm}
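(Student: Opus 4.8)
The plan is a straightforward Zorn's lemma argument applied to the poset of admissible candidates. Let $\Sigma$ denote the collection of all submodules $K$ of $M$ satisfying $K\subseteq N$ and $K$ is a 2-pure submodule of $M$, partially ordered by inclusion. I want to produce a maximal element of $\Sigma$.

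First I would check that $\Sigma$ is nonempty, which also disposes of the empty chain: the zero submodule is a 2-pure submodule of $M$, since for all proper ideals $I,J$ of $R$ one has $IJ\cdot 0=0=(I\cdot 0)\cap(J\cdot 0)\cap IJM$, and of course $0\subseteq N$; hence $0\in\Sigma$. Next I would verify the chain hypothesis of Zorn's lemma. Given a chain $\{K_\lambda\}_{\lambda\in\Lambda}$ in $\Sigma$, each $K_\lambda$ is a 2-pure submodule of $M$, so by Proposition \ref{p5.8}(a) the submodule $\bigcup_{\lambda\in\Lambda}K_\lambda$ is again a 2-pure submodule of $M$; since moreover $K_\lambda\subseteq N$ for every $\lambda$, the union is contained in $N$. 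Thus $\bigcup_{\lambda\in\Lambda}K_\lambda$ lies in $\Sigma$ and is an upper bound for the chain.

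By Zorn's lemma, $\Sigma$ has a maximal element $K$, and this $K$ is by construction a submodule of $N$ that is maximal with respect to being contained in $N$ and 2-pure in $M$, which is exactly the assertion. I do not anticipate any genuine obstacle here: the entire force of the argument is carried by Proposition \ref{p5.8}(a), which guarantees that 2-purity is preserved under unions of chains, together with the trivial fact that $0$ is 2-pure so that $\Sigma$ is nonempty.
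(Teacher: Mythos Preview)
Your argument is correct and mirrors the paper's proof essentially line for line: define $\Sigma$ as the set of 2-pure submodules of $M$ contained in $N$, observe $0\in\Sigma$, invoke Proposition \ref{p5.8}(a) to handle chains, and apply Zorn's Lemma. The only difference is that you spell out explicitly why $0$ is 2-pure, which the paper leaves implicit.
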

\begin{proof}
Let
$$
\Sigma= \{ H\leq N | H \\\ is \\\ a \\\ 2-absorbing \\\ pure\\\  submodule\\\ of \\\ M \}.
$$
Then $0 \in \Sigma$ implies that $\Sigma \not = \emptyset$. Let $\{N_\lambda\}_{\lambda \in \Lambda}$ be a
totally ordered subset of $\Sigma$. Then  $\cup_{\lambda \in \Lambda}N_\lambda\leq N$ and by Proposition \ref{p5.8} (a),  $\cup_{\lambda \in \Lambda}N_\lambda$ is a 2-pure submodule of $M$. Thus by using Zorn's Lemma, one can see that
$\Sigma$ has a maximal element, $K$ say as needed.
\end{proof}

\begin{defn}\label{d7.212}
We say that a pure submodule $N$ of an $R$-module $M$
is a  \emph {maximal pure submodule} of a submodule
$K$ of $M$, if $N \subseteq K$ and there does not exist a pure submodule $H$ of $M$ such that $N \subset H \subset K$.
\end{defn}

An $R$-module $M$ is called a \emph{fully cancellation module} if for each non-zero ideal $I$ of $R$ and for each submodules $N_1, N_2$ of $M$ such that $IN_1=IN_2$
implies $N_1=N_2$ \cite{HE14}.

\begin{thm}\label{c3.10}
Every Artinian fully cancellation $R$-module $M$ has only a finite
number of maximal pure submodules.
\end{thm}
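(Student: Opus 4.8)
The plan is to argue by contradiction: assuming $M$ has infinitely many pairwise distinct maximal pure submodules, first use the Artinian hypothesis to cut the family down to a finite one, and then use the fully cancellation hypothesis to contradict maximality.

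First I would extract the structural content of ``fully cancellation''. If $N$ is pure then $IN=N\cap IM$ for every ideal $I$, so two distinct pure submodules $N\neq N'$ must satisfy $N\cap IM\neq N'\cap IM$ for every nonzero ideal $I$ (otherwise $IN=IN'$ and cancellation gives $N=N'$). Applying this with $I=aR$, and in particular to the submodule $(0:_M a)$ for $0\neq a\in R$, shows that in a nonzero fully cancellation module no nonzero element of $M$ is annihilated by a nonzero element of $R$; combined with a simple submodule supplied by the Artinian hypothesis, this rigidifies both $R$ and the submodule lattice of $M$, and that rigidity is the lever I would push on.

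Now suppose $N_1,N_2,N_3,\dots$ are pairwise distinct maximal pure submodules of $M$. Since $M$ is Artinian, the descending chain $N_1\supseteq N_1\cap N_2\supseteq N_1\cap N_2\cap N_3\supseteq\cdots$ stabilizes, so $\bigcap_{i=1}^{m}N_i\subseteq N_{m+1}$ for some $m$. Passing to $M/\bigcap_{i=1}^{m}N_i$ and using the hereditary behaviour of purity (Proposition~\ref{p5.9}), one reduces to the case of finitely many maximal pure submodules meeting in $0$ together with one further maximal pure submodule lying above their intersection. The crux --- and the step I expect to be the main obstacle --- is to promote the containment $\bigcap_{i=1}^{m}N_i\subseteq N_{m+1}$ to an equality $N_{m+1}=N_i$ for some $i$, which contradicts distinctness; equivalently, to show that under the fully cancellation hypothesis a maximal pure submodule behaves like a meet-irreducible (prime-type) coatom in the lattice of pure submodules, and hence cannot properly contain the intersection of others without being one of them. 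This is precisely where ``Artinian'' (minimal submodules, finite socle, finite Goldie dimension) and ``fully cancellation'' (faithful element-wise action of $R$, and the resulting rigidity of the submodule lattice) must be combined, and I expect essentially all of the real work --- and the sensitivity to the precise hypotheses on $R$ --- to be concentrated in this irreducibility/counting step.
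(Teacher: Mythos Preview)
Your main line of attack has a genuine gap. After stabilising the chain $N_1\supseteq N_1\cap N_2\supseteq\cdots$ you need that a maximal pure submodule containing $\bigcap_{i\le m}N_i$ must coincide with some $N_i$. That is a prime-avoidance statement, and nothing in the hypotheses supplies a primeness property for pure submodules; you flag this as ``the crux'' but offer no mechanism, and none is apparent. The quotient reduction compounds the problem: fully cancellation is \emph{not} in general inherited by $M/\bigcap_{i\le m}N_i$ (from $IN_1+K=IN_2+K$ one cannot conclude $IN_1=IN_2$), so after passing to the quotient you have discarded precisely the hypothesis you planned to exploit. Your opening structural observations (torsion-freeness, existence of a simple submodule forcing strong constraints on $R$) are correct and could in fact be pushed to a short direct argument, but you abandon them in favour of the intersection strategy rather than completing them.

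The paper's proof is organised quite differently and makes no irreducibility claim. It uses the Artinian hypothesis not to stabilise intersections but to select a \emph{minimal criminal}: a submodule $S\subseteq M$ minimal among those admitting infinitely many maximal pure submodules. Then $S$ itself cannot be pure, so one fixes an ideal $I$ with $IS\neq S\cap IM$. For any maximal pure $V\subseteq S$ one shows $IS+V\neq S$ (modular law and purity of $V$, otherwise $S\cap IM\subseteq IS$) and $IS+V\neq V$ (otherwise $IS\subseteq V$ gives $IS=IV$, and fully cancellation forces $V=S$). Hence $V\subsetneq IS+V\subsetneq S$, and minimality of $S$ is invoked on the proper submodule $IS+V$. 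The fully cancellation hypothesis thus enters at one concrete point --- ruling out $IS\subseteq V$ --- rather than through any lattice-theoretic irreducibility of maximal pure submodules.
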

\begin{proof}
Suppose that the result is false. Let $\Sigma$ denote the
collection of non-zero submodules $N$ of $M$ such that $N$ has an
infinite number of maximal pure submodules. The collection
$\Sigma$ is non-empty because $M \in \Sigma$ and hence has a minimal
member, $S$ say. Then $S$ is not a pure submodule. Thus there exists an ideal $I$ of $R$ such that $IS \not= S \cap IM$. Let $V$ be a maximal pure submodule of $M$ contained in $S$. If $IS+V=S$, then $(IS+V)\cap IM=S\cap IM$. Hence by the modular law, $IS+V \cap IM=S\cap IM$. Now as  $V$ is a pure submodule of $M$, $IS+IV=S \cap IM$. It follows that $S \cap IM \subseteq IS$, a contradiction. If $IS+V=V$, then $IS \subseteq V$ and so $IS=IS \cap IM\subseteq  V \cap IM=IV$. Thus $IV=IS$. Since $M$ is a fully cancellation $R$-module, $V=S$, a contradiction. Therefore, $V \subset IS+V \subset S$. Now by the choice of $S$, the module $IS+V$ has only finitely many maximal pure submodules. Therefore, there is only a finite number of possibilities for the module $S$ which is a contradiction.
\end{proof}

\begin{defn}\label{d3.1}
 We say that an $R$-module $M$ is \emph{fully 2-pure} if every submodule of $M$ is 2-pure.
\end{defn}

Let $N$ and $K$ be two submodules of $M$. The \emph{product} of $N$ and $K$ is defined by $(N:_RM)(K:_RM)M$  and denoted by $NK$ \cite{AF007}.
\begin{thm}\label{t3.11}
Let $M$ be a multiplication $R$-module. Then the following statements are equivalent.
\begin{itemize}
  \item [(a)] For submodules $N, K, H$ of $M$, we have $NHK=NK \cap NH \cap KH$.
  \item [(b)] $M$ is a fully 2-pure $R$-module.
\end{itemize}
\end{thm}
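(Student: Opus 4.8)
The plan is to reduce both conditions, via the hypothesis that $M$ is a multiplication module, to statements purely about ideals and then to recognize condition (a) as nothing more than the $2$-purity identity applied to the middle factor. Throughout I would use the standard fact about multiplication modules that products of submodules are computed on the ideals $(-:_RM)$: if $N=IM$, $K=JM$, $H=LM$ for ideals $I,J,L$ of $R$, then $NK=IJM$, $NH=ILM$, $KH=JLM$, and the (associative) triple product is $NHK=IJLM$; in particular $(IM)K=IK$ for an ideal $I$ and a submodule $K=(K:_RM)M$. I would also record the trivial facts that $(X:_RM)=R$ exactly when $X=M$, that $PM=P$ for every submodule $P$ of $M$, and that a product of submodules lies in each of its factors, so that $NHK\subseteq NK\cap NH\cap KH$ holds automatically (by monotonicity of the product, using $HK\subseteq K$, $NK\subseteq N$, and $N(HK)\subseteq M(HK)=KH$).

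For (b)$\Rightarrow$(a), given submodules $N,K,H$ I put $I=(N:_RM)$, $J=(K:_RM)$, $L=(H:_RM)$. If $N=M$, then the right-hand side of (a) collapses to $K\cap H\cap KH=KH=NHK$, and the case $H=M$ is symmetric; so I may assume $I$ and $L$ are proper ideals. Then $2$-purity of $K$ gives $ILK=IK\cap LK\cap ILM$, and translating the four terms through the product formulas ($ILK=IJLM=NHK$, $IK=IJM=NK$, $LK=JLM=KH$, $ILM=NH$) yields exactly $NHK=NK\cap NH\cap KH$. For (a)$\Rightarrow$(b), I fix a submodule $K$ and proper ideals $A,B$ of $R$ and apply (a) to the triple $N=AM$, $H=BM$, $K$; here the four products rewrite as $(AM)(BM)K=ABK$, $(AM)K=AK$, $(AM)(BM)=ABM$, $K(BM)=BK$, so (a) becomes $ABK=AK\cap BK\cap ABM$. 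Since $K$, $A$, $B$ were arbitrary, every submodule of $M$ is $2$-pure.

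The substance of the argument is thus concentrated in the product formula for multiplication modules; the rest is bookkeeping. The one place needing care is the degenerate cases $N=M$ or $H=M$ in the first implication: there $(N:_RM)$ or $(H:_RM)$ equals $R$, so the defining identity of $2$-purity, which only concerns proper ideals, is not directly available and those cases must be disposed of by hand, which is immediate from $PM=P$ and $KH\subseteq K\cap H$. I would also double-check that the product formula holds in the needed generality (an arbitrary multiplication module, not assumed finitely generated or faithful), citing it from the literature on multiplication modules, after which the proof is complete.
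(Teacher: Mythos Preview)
Your proof is correct and follows the same underlying strategy as the paper: use the identity $(IM)K=IK$ (and its iterates) in a multiplication module to translate between submodule products and ideal actions, and then recognize condition~(a) as exactly the $2$-purity identity for one of the three factors.

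There are two minor differences worth recording. First, for $(b)\Rightarrow(a)$ you apply $2$-purity to the middle factor $K$ with the ideals $(N:_RM)$ and $(H:_RM)$, while the paper applies $2$-purity to $H$ with the ideals $(N:_RM)$ and $(K:_RM)$; these are symmetric variants of the same move. More importantly, you explicitly handle the degenerate cases $N=M$ and $H=M$, where the colon ideals are not proper and the defining identity of $2$-purity is unavailable; the paper's argument silently assumes properness here, so your version is the more careful one. Second, for $(a)\Rightarrow(b)$ your argument is the direct one: apply (a) to the triple $AM,\,BM,\,K$ and read off $ABK=AK\cap BK\cap ABM$ immediately. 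The paper instead passes through the triple product $(IN)(JN)(IJM)$ and a chain of containments $(IN)(JN)(IJM)\subseteq N^2(IJM)\subseteq N(IJM)\subseteq IJN$; your route is shorter and sidesteps that detour.
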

\begin{proof}
$(a)\Rightarrow (b)$. Let $N$ be a submodule of $M$ and $I$, $J$ be two proper ideals of $R$. Then by part (a) and the fact that $M$ is a multiplication $R$-module,
$$
IN \cap JN \cap IJM=(IM)(N) \cap (JM)(N) \cap (IJM)(M)=(IN)(JN)(IJM)\subseteq
$$
$$
(N^2)(IJM)\subseteq N(IJM)\subseteq IJN.
$$
The reverse inclusion is clear.

$(b)\Rightarrow (a)$. We have
$$
NK \cap NH \cap KH=(N:_RM)(K:_RM)M \cap (N:_RM)H \cap (K:_RM)H=
$$
$$
(N:_RM)(K:_RM)H=NKH.
$$
\end{proof}

\begin{thm}\label{t3.12}
Let $M$ be a finitely generated faithful multiplication $R$-module and $N$ be a submodule of $M$. Then $N$ is a  2-pure submodule of $M$ if and only if $(N:_RM)$ is a  2-pure ideal of $R$.
\end{thm}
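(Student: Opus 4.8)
The plan is to reduce everything to the well-behaved dictionary between submodules of $M$ and ideals of $R$ that is available for finitely generated faithful multiplication modules. Recall first that, $M$ being multiplication, $L=(L:_RM)M$ for every submodule $L$ of $M$ (as noted in the Introduction), and that, $M$ being moreover finitely generated and faithful, it is a cancellation module; hence $I\mapsto IM$ and $L\mapsto (L:_RM)$ are mutually inverse, inclusion-preserving bijections between the ideals of $R$ and the submodules of $M$. In particular $(IM:_RM)=I$ for every ideal $I$, and since this bijection is an order isomorphism of complete lattices it preserves arbitrary intersections, so $\big((L\cap L'):_RM\big)=(L:_RM)\cap(L':_RM)$ for all submodules $L,L'$ of $M$.

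Next I would translate the defining condition of $2$-purity term by term. Put $A=(N:_RM)$, so $N=AM$. For any ideals $I,J$ of $R$ one has $IN=I(AM)=(IA)M$, $JN=(JA)M$, $IJN=(IJA)M$, while $IJM=(IJ)M$; applying $(-:_RM)$ and using $(XM:_RM)=X$ gives $(IN:_RM)=IA$, $(JN:_RM)=JA$, $(IJN:_RM)=IJA$, $(IJM:_RM)=IJ$. Combining this with the fact that $(-:_RM)$ preserves intersections yields
$$
\big(IN\cap JN\cap IJM:_RM\big)=IA\cap JA\cap IJ .
$$

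Now I would assemble the equivalence. Since $(-:_RM)$ is a bijection on submodules/ideals, for any fixed proper ideals $I,J$ of $R$ the equality $IJN=IN\cap JN\cap IJM$ holds if and only if $IJA=IA\cap JA\cap IJ$. As $I,J$ range over all proper ideals of $R$, the left-hand statement says exactly that $N$ is a $2$-pure submodule of $M$, and the right-hand statement says exactly that $A=(N:_RM)$ satisfies $IJA=IA\cap JA\cap IJR$ for all proper $I,J$, i.e.\ that $(N:_RM)$ is a $2$-pure submodule of the $R$-module $R$ — a $2$-pure ideal of $R$ in the sense of Definition~\ref{d5.1}. This gives both implications at once.

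I do not expect a genuine obstacle here: the only substantive inputs are the cancellation property of finitely generated faithful multiplication modules (which makes $I\mapsto IM$ injective and hence $(-:_RM)$ bijective) and the elementary remark that an order isomorphism of lattices preserves finite intersections. The one place to be careful is the bookkeeping — correctly identifying the image under $(-:_RM)$ of each of the four submodules $IN$, $JN$, $IJN$, $IJM$, and noting that the quantifier "for all proper ideals $I,J$" is literally the same on the module side and the ideal side, so no mismatch of hypotheses can slip in.
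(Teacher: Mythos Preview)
Your proposal is correct and follows essentially the same approach as the paper: both arguments rest on the cancellation property of a finitely generated faithful multiplication module (the paper cites \cite[3.1]{BS89}) to pass between equalities of submodules of $M$ and equalities of ideals of $R$. You package this as a single lattice-isomorphism argument handling both directions at once, whereas the paper treats each implication separately via the trivial containment $(I(N:_RM)\cap J(N:_RM)\cap IJ)M\subseteq I(N:_RM)M\cap J(N:_RM)M\cap IJM$ together with cancellation, but the substance is the same.
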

\begin{proof}
Since $M$ is a multiplication $R$-module, $N=(N:_RM)M$.
Let $N$ be a 2-pure submodule of $M$ and let $I$ and $J$ be any two
proper ideals of $R$. Then $IJN = IN \cap JN \cap IJM$. Hence,
$$
 IJ(N:_RM)M = I(N:_RM)M \cap J(N:_RM)M \cap IJM.
$$
This implies that $IJ(N:_RM)M \supseteq (I(N:_RM) \cap J(N:_RM) \cap IJ)M$. Now by \cite[3.1]{BS89},  $IJ(N:_RM) \supseteq I(N:_RM) \cap J(N:_RM) \cap IJ$. This implies that $(N:_RM)$ is a  2-pure ideal of $R$ since the reverse inclusion is clear. Conversely, let $(N:_RM)$ be a  2-pure ideal of $R$ and let $I$ and $J$ be any two
proper ideals of $R$. Then
 $IJ(N:_RM) = I(N:_RM) \cap J(N:_RM) \cap IJ$.  Hence $IJ(N:_RM)M = (I(N:_RM) \cap J(N:_RM) \cap IJ)M$. Thus
$IJ(N:_RM)M = I(N:_RM)M \cap J(N:_RM)M \cap IJM$ by  \cite[3.1]{BS89}. Therefore, $IJN = IN \cap JN \cap IJM$ as desired.
\end{proof}

\section{$n$-pure submodules}
\begin{defn}\label{d15.1}
Let $n$ be a positive integer. We say that a submodule $N$ of an $R$-module $M$ is a \emph {$n$-pure submodule of $M$} if $I_1I_2...I_nN=I_1N \cap I_2N\cap...I_nN\cap (I_1I_2...I_n)M$ for all proper ideals $I_1, I_2,...I_n$ of $R$. Also, we say that an ideal $I$ of $R$ is a \emph {$n$-pure ideal of $R$} if $I$ is a $n$-pure submodule of $R$.
\end{defn}

\begin{rem}\label{r15.2}
Let $n$ be a positive integer.
Clearly every $(n-1)$-pure submodule of an $R$-module $M$ is a $n$-pure submodule of $M$. But we see in the Example \ref{e15.3} that the converse is not true in general.
\end{rem}

\begin{ex}\label{e15.3}
Let $n$ be a positive integer.
The submodule $\bar{2}\Bbb Z_{2^n}$ of the $\Bbb Z_{2^n}$-module $\Bbb Z_{2^n}$ is a $n$-pure submodule but it is not a  $(n-1)$-pure submodule.
\end{ex}

\begin{ex}\label{e515.3}
Let $n$ be a positive integer.
\begin{itemize}
  \item [(a)] The submodule $2\Bbb Z$ of the $\Bbb Z$-module $\Bbb Z$ is a not a $n$-pure submodule.
  \item [(b)] Let $n>1$ be an integer.  Since
  $$
  2^n\Bbb Z=\underbrace{(2\Bbb Z)...(2\Bbb Z)}_\text{$n$ times} \Bbb Z\not =\underbrace{ (2\Bbb Z)\Bbb Z \cap (2\Bbb Z) \Bbb Z \cap...\cap (2\Bbb Z)\Bbb Z }_\text{$n$ times}\cap (2^n\Bbb Z) \Bbb Q=2\Bbb Z,
$$
 the submodule $\Bbb Z$ of the $\Bbb Z$-module $\Bbb Q$ is not $n$-pure.
\end{itemize}
\end{ex}

\begin{prop}\label{l115.3}
Let $N$ be a submodule of an $R$-module $M$ and $n$ be a positive integer. If $I_1...I_nN=I_1N \cap I_2N...\cap I_nN$ for all proper ideals $I_1, I_2,...,I_n$ of $R$, then $N$ is a $n$-pure submodule of $M$.
  \end{prop}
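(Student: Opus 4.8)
The plan is to follow the same elementary observation used in the proof of Proposition~\ref{l15.3}(a) (which is exactly the case $n=2$), namely that the hypothesis already forces the product $I_1\cdots I_nN$ to sit inside $(I_1\cdots I_n)M$, so the extra intersection term appearing in Definition~\ref{d15.1} is superfluous. First I would fix proper ideals $I_1,I_2,\dots,I_n$ of $R$ and invoke the hypothesis to write
$$
I_1I_2\cdots I_nN = I_1N\cap I_2N\cap\cdots\cap I_nN .
$$
The only thing left to verify is that intersecting the right-hand side with $(I_1I_2\cdots I_n)M$ does not change it.

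To see this, note that since $N\subseteq M$ we have $I_1I_2\cdots I_nN\subseteq (I_1I_2\cdots I_n)M$. Combining this inclusion with the displayed equality gives
$$
I_1N\cap I_2N\cap\cdots\cap I_nN = I_1I_2\cdots I_nN \subseteq (I_1I_2\cdots I_n)M ,
$$
so that
$$
I_1N\cap I_2N\cap\cdots\cap I_nN\cap (I_1I_2\cdots I_n)M = I_1N\cap I_2N\cap\cdots\cap I_nN = I_1I_2\cdots I_nN .
$$
Since $I_1,\dots,I_n$ were arbitrary proper ideals of $R$, this is precisely the defining condition for $N$ to be an $n$-pure submodule of $M$, and the proof is complete.

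I do not anticipate any genuine obstacle: the hypothesis is strictly stronger than $n$-purity, because it collapses the full intersection $I_1N\cap\cdots\cap I_nN$ onto the product $I_1\cdots I_nN$, and that product automatically lies in $(I_1\cdots I_n)M$, rendering the last intersection factor in Definition~\ref{d15.1} redundant. The argument uses nothing about $M$ beyond the containment $N\subseteq M$, so it is purely formal and works uniformly in $n$; it is the verbatim $n$-analogue of Proposition~\ref{l15.3}(a), and a completely parallel (and equally routine) argument would generalize part (b) of that proposition, replacing ``$IN$ pure'' by ``$I_1N$ is $(n-1)$-pure for every ideal $I_1$''.
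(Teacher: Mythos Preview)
Your proof is correct and is exactly the approach the paper intends: it simply says ``Use the technique of Proposition~\ref{l15.3}(a)'', which is precisely the argument you wrote out for general $n$. The observation that $I_1\cdots I_nN\subseteq (I_1\cdots I_n)M$ makes the extra intersection term redundant is all that is needed.
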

\begin{proof}
Use the technique of  Proposition \ref{l15.3} (a).
\end{proof}

\begin{prop}\label{p15.9}
Let $M$ be an $R$-module,  $N\subseteq K$ be two submodules of $M$, and $n$ be a positive integer.  Then the $n$-absorbing purity relation satisfies the following.
\begin{itemize}
  \item [(a)] Transitive: If $N$ is a $n$-pure submodule of $K$ and $K$ is a $n$-pure submodule of $M$, then $N$ is a $n$-pure submodule of $M$.
  \item [(b)] Hereditary: If  $K$ is a $n$-pure submodule of $M$, then $K/N$ is a $n$-pure submodule of $M/N$.
  \item [(c)] If $K/N$ is a $n$-pure submodule of $M/N$ and $N$ is a pure submodule of $M$, then $K$ is a $n$-pure submodule of $M$.
\end{itemize}
\end{prop}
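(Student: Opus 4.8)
The plan is to transport the proof of the transitive/hereditary statement for $2$-pure submodules (the case $n=2$, proved earlier) to the $n$-ary setting, replacing the two-ideal product $IJ$ by the $n$-fold product $P:=I_1I_2\cdots I_n$. Two trivial remarks will be used throughout: $P$ is again a proper ideal, since $P\subseteq I_1$; and $P\subseteq I_j$ for every $j$, so in each of the three parts the inclusion
$$
PN\subseteq I_1N\cap I_2N\cap\cdots\cap I_nN\cap PM
$$
(and its analogue with $K$ in place of $N$) is automatic, and only the reverse inclusion has to be established.

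For (a), given $x\in I_1N\cap\cdots\cap I_nN\cap PM$, I would first use $N\subseteq K$ to get $x\in I_jK$ for every $j$, whence $x\in I_1K\cap\cdots\cap I_nK\cap PM=PK$ because $K$ is $n$-pure in $M$; since also $x\in I_1N\cap\cdots\cap I_nN$, the $n$-purity of $N$ in $K$ then gives $x\in I_1N\cap\cdots\cap I_nN\cap PK=PN$, as wanted. Part (b) is obtained, exactly as in the $2$-pure case, by pushing the defining identity for $K$ in $M$ forward along the projection $M\to M/N$: using $I_j(K/N)=(I_jK+N)/N$, $P(K/N)=(PK+N)/N$, $P(M/N)=(PM+N)/N$, and the order-isomorphism between submodules of $M/N$ and submodules of $M$ containing $N$ (under which intersections correspond to intersections), the identity $PK=I_1K\cap\cdots\cap I_nK\cap PM$ descends to $PK+N=(I_1K+N)\cap\cdots\cap(I_nK+N)\cap(PM+N)$, which is precisely the assertion that $K/N$ is $n$-pure in $M/N$.

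For (c), the $n$-purity of $K/N$ in $M/N$ gives, after the same unwinding through $M\to M/N$,
$$
PK+N=(I_1K+N)\cap\cdots\cap(I_nK+N)\cap(PM+N)\supseteq(I_1K\cap\cdots\cap I_nK\cap PM)+N.
$$
I would then take $x\in I_1K\cap\cdots\cap I_nK\cap PM$; by the display, $x\in PK+N$, so $x=y+n_0$ with $y\in PK$ and $n_0\in N$. Then $x-y=n_0\in N$, and also $x-y\in PM$ because $x\in PM$ and $y\in PK\subseteq PM$; hence $x-y\in N\cap PM=PN$, where I use that $N$ is a pure submodule of $M$ (in the Anderson--Fuller sense) applied to the single ideal $P=I_1\cdots I_n$. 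Since $N\subseteq K$ we get $x-y\in PN\subseteq PK$, and therefore $x=(x-y)+y\in PK$. This yields $I_1K\cap\cdots\cap I_nK\cap PM\subseteq PK$, and with the automatic reverse inclusion $K$ is $n$-pure in $M$.

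Once the $n=2$ proof is understood, none of this is really an obstacle; the only genuine input is the equality $N\cap PM=PN$ in part~(c), which is precisely where the hypothesis that $N$ is pure in $M$ is used (it is applied to the composite ideal $P$, not to the separate $I_j$'s), and it is also the reason why (c) does not follow from (b) alone. The remaining subtlety is purely bookkeeping: one must remember to write the descended identities in (b) and (c) in terms of the product $P$ and not in terms of the individual ideals $I_j$.
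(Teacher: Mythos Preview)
Your proposal is correct and follows essentially the same route as the paper. The paper's proof of this proposition is literally ``Use the technique of Proposition~\ref{p5.9}'' (the $2$-pure case), and in that $2$-pure version parts~(a) and~(b) are declared ``clear'' while part~(c) is argued exactly as you do, with $IJ$ in place of your $P=I_1\cdots I_n$ and the key step being $N\cap IJM=IJN$ from purity of $N$; you have simply written out the $n$-ideal version explicitly.
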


\begin{proof}
Use the technique of Proposition \ref{p5.9}.
\end{proof}

\begin{prop}\label{p15.6}
Let $n$ be a positive integer.
A submodule $N $ of an $R$-module $M $ is a $n$-pure submodule if and only if  $N_P$ is a  $n$-pure submodule of $M_P$ for every maximal ideal $P$ of $R$.
\end{prop}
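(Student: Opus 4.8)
The plan is to prove that $n$-purity is a local property, by localizing the defining equality at each maximal ideal, exactly mirroring the argument used above for the case $n=2$. The only input required is that localization at a maximal ideal $P$ commutes with finite products of ideals, with the formation of $IM$, and with finite intersections of submodules; this is \cite[9.11]{sh90}. Consequently, for all ideals $I_1,\dots,I_n$ of $R$ one has
\[
(I_1\cdots I_nN)_P=(I_1)_P\cdots(I_n)_PN_P
\]
and
\[
(I_1N\cap\cdots\cap I_nN\cap(I_1\cdots I_n)M)_P=(I_1)_PN_P\cap\cdots\cap(I_n)_PN_P\cap(I_1)_P\cdots(I_n)_PM_P.
\]

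For the ``only if'' direction, assume $N$ is $n$-pure in $M$. Every $n$-tuple of proper ideals of $R_P$ can be written as $(J_1)_P,\dots,(J_n)_P$ where each $J_i$ is a proper ideal of $R$ contained in $P$; applying the $n$-purity equality for $J_1,\dots,J_n$ in $M$ and localizing at $P$ via the two displays above yields the corresponding equality in $M_P$. Hence $N_P$ is $n$-pure in $M_P$ for every maximal ideal $P$.

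For the ``if'' direction, assume $N_P$ is $n$-pure in $M_P$ for all maximal $P$, and fix proper ideals $I_1,\dots,I_n$ of $R$. The inclusion $I_1\cdots I_nN\subseteq I_1N\cap\cdots\cap I_nN\cap(I_1\cdots I_n)M$ always holds, so it suffices to establish the reverse inclusion, and since inclusions of submodules of $M$ are detected locally it is enough to fix a maximal ideal $P$ and prove
\[
(I_1)_PN_P\cap\cdots\cap(I_n)_PN_P\cap(I_1)_P\cdots(I_n)_PM_P\subseteq(I_1)_P\cdots(I_n)_PN_P.
\]
Let $k$ be the number of indices $i$ with $I_i\subseteq P$. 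For each $i$ with $I_i\not\subseteq P$ we have $(I_i)_P=R_P$, and since $P$ is prime the product over those indices is also $R_P$; all such factors therefore cancel from both sides, and the displayed inclusion reduces to an instance of ``$k$-purity'' of $N_P$ in $M_P$ in the $k$ proper ideals $(I_i)_P$ with $I_i\subseteq P$. When $k\le 1$ this reduced inclusion is trivial (for $k=1$ it says $JN_P\cap JM_P\subseteq JN_P$ for a proper ideal $J$ of $R_P$), and when $k=n$ it is precisely the hypothesis that $N_P$ is $n$-pure in $M_P$.

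The values $2\le k\le n-1$ are where the difficulty concentrates: after discarding the ideals that became the unit ideal one is left asking for $k$-purity of $N_P$ with $k$ strictly between $1$ and $n$, whereas Remark~\ref{r15.2} only furnishes implications in the opposite direction. For $n=2$ this range is empty, which is exactly why Proposition~\ref{p5.6} holds with no further hypotheses; for $n\ge 3$ the argument does not close without additional leverage on the modules $M_P$ (for instance a condition on $M$, such as being a multiplication module, forcing $N_P$ to be $k$-pure for every $k\le n$), and I expect supplying this to be the main obstacle.
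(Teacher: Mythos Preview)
Your diagnosis is exactly right, and it is more careful than the paper's own treatment, which simply says ``use the technique of Proposition~\ref{p5.6}'' without confronting the issue you isolate. The gap is not an artefact of your approach: for $n\ge 3$ the ``if'' direction of the proposition is in fact false.

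Here is a counterexample for $n=3$. Take $R=M=\Bbb Z_8\times\Bbb Z_8$ and $N=\bar 2\Bbb Z_8\times\bar 2\Bbb Z_8$. The maximal ideals of $R$ are $P_1=\bar 2\Bbb Z_8\times\Bbb Z_8$ and $P_2=\Bbb Z_8\times\bar 2\Bbb Z_8$; one has $R_{P_i}\cong\Bbb Z_8$ and $N_{P_i}\cong\bar 2\Bbb Z_8$, so by Example~\ref{e15.3} each $N_{P_i}$ is $3$-pure in $M_{P_i}$. But with the proper ideals $I_1=I_2=P_1$ and $I_3=P_2$ one computes
\[
I_1I_2I_3N=0\times\bar 4\Bbb Z_8,\qquad I_1N\cap I_2N\cap I_3N\cap I_1I_2I_3M=\bar 4\Bbb Z_8\times\bar 4\Bbb Z_8,
\]
so $N$ is not $3$-pure in $M$. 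The failure localises exactly as you predicted: at $P_1$ the ideal $(I_3)_{P_1}$ becomes the unit ideal, the localised $3$-purity equation collapses to the $2$-purity condition on $N_{P_1}=\bar 2\Bbb Z_8$ in $\Bbb Z_8$, and this fails since $\bar 2\Bbb Z_8$ is not $2$-pure in $\Bbb Z_8$ (Example~\ref{e15.3} again).

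Your argument for the ``only if'' direction is complete and matches the paper's intended method. The converse, however, cannot be closed for $n\ge 3$ by the localisation technique alone; the statement as written needs additional hypotheses (for instance, assuming $N_P$ is $k$-pure in $M_P$ for all $k\le n$, equivalently pure) before the argument goes through.
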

\begin{proof}
Use the technique of Proposition \ref{p5.6}.
\end{proof}

\begin{prop}\label{p15.8}
Let $M$ be an $R$-module and $n$ be a positive integer. Then we have the following.
\begin{itemize}
  \item [(a)] If $\{N_\lambda\}_{\lambda \in \Lambda}$ is a chain of $n$-pure submodules of $M$, then
  $\cup_{\lambda \in \Lambda}N_\lambda$ is  a $n$-pure submodules of $M$.
  \item [(b)] If $\{N_\lambda\}_{\lambda \in \Lambda}$ is a chain of submodules of $M$ and $K$ is a $n$-pure submodule of $N_\lambda$ for each $\lambda \in \Lambda$, then $K$ is a $n$-pure submodule of $\cup_{\lambda \in \Lambda}N_\lambda$.
\end{itemize}
\end{prop}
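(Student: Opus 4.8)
The plan is to follow the proof of Proposition~\ref{p5.8} almost verbatim, the only change being that one now juggles $n$ proper ideals $I_1,\dots,I_n$ rather than two. In each part the inclusion ``$\subseteq$'' is immediate: $I_1\cdots I_n L \subseteq I_k L$ for every $k$ and every module $L$, and $I_1\cdots I_n L \subseteq (I_1\cdots I_n)M$ once $L \subseteq M$. So I would spend all the effort on the reverse inclusion, where the chain hypothesis does the work.

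For (a), I would fix proper ideals $I_1,\dots,I_n$ of $R$ and pick $x \in I_1(\bigcup_\lambda N_\lambda) \cap \cdots \cap I_n(\bigcup_\lambda N_\lambda) \cap (I_1\cdots I_n)M$. For each $k$, $x$ is a \emph{finite} $R$-linear combination of elements of $\bigcup_\lambda N_\lambda$ with coefficients in $I_k$; gathering the finitely many module elements appearing in all $n$ of these expressions and using that $\{N_\lambda\}_{\lambda\in\Lambda}$ is a chain, I would produce a single $\alpha \in \Lambda$ with $x \in I_k N_\alpha$ for every $k$. Then $x \in I_1 N_\alpha \cap \cdots \cap I_n N_\alpha \cap (I_1\cdots I_n)M = I_1\cdots I_n N_\alpha$ since $N_\alpha$ is $n$-pure in $M$, whence $x \in I_1\cdots I_n(\bigcup_\lambda N_\lambda)$, giving the reverse inclusion.

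For (b) the mechanism is the same. Fixing proper ideals $I_1,\dots,I_n$ and taking $x \in I_1 K \cap \cdots \cap I_n K \cap (I_1\cdots I_n)(\bigcup_\lambda N_\lambda)$, I would write $x$ as a finite combination of elements of $\bigcup_\lambda N_\lambda$ with coefficients in $I_1\cdots I_n$ and, invoking the chain property once, locate $\beta \in \Lambda$ with $x \in (I_1\cdots I_n)N_\beta$. Then $x \in I_1 K \cap \cdots \cap I_n K \cap (I_1\cdots I_n)N_\beta = I_1\cdots I_n K$ because $K$ is $n$-pure in $N_\beta$, which is exactly the containment wanted.

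I do not expect a genuine obstacle here; the only point needing a moment's care is the passage to a single index $\alpha$ (resp.\ $\beta$), which relies on the fact that membership in $I_k(\bigcup_\lambda N_\lambda)$ or in $(I_1\cdots I_n)(\bigcup_\lambda N_\lambda)$ only ever involves finitely many of the $N_\lambda$, so that the chain condition can absorb them all into one term. Everything else is a formal rearrangement of the defining identity in Definition~\ref{d15.1}.
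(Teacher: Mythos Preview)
Your proposal is correct and is precisely the approach the paper takes: the paper's proof simply reads ``Use the technique of Proposition~\ref{p5.8},'' and you have carried out exactly that generalization from two ideals to $n$ ideals, including the key observation that only finitely many $N_\lambda$ are involved so the chain condition yields a single index.
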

\begin{proof}
Use the technique of Proposition \ref{p5.8}.
\end{proof}

\begin{thm}\label{p15.9}
Let $N$ be a submodule of an $R$-module $M$ and let $n$ be a positive integer.  Then there is a submodule $K$ of $N$ maximal with respect to $K \subseteq N$ and $K$ is a $n$-pure submodule of $M$.
\end{thm}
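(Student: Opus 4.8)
The plan is to mimic the proof of Theorem~\ref{p5.9}, replacing the appeal to Proposition~\ref{p5.8}(a) by its $n$-pure analogue Proposition~\ref{p15.8}(a), and to conclude by Zorn's Lemma.

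First I would set
$$
\Sigma = \{\, H \leq N \mid H \text{ is an } n\text{-pure submodule of } M \,\}.
$$
The zero submodule lies in $\Sigma$: for any proper ideals $I_1,\dots,I_n$ of $R$ we have $I_1\cdots I_n \cdot 0 = 0 = I_1\cdot 0 \cap \cdots \cap I_n \cdot 0 \cap (I_1\cdots I_n)M$, so $\{0\}$ is $n$-pure and $\Sigma \neq \emptyset$. Then I would take an arbitrary totally ordered subset $\{N_\lambda\}_{\lambda \in \Lambda}$ of $\Sigma$. Its union $\bigcup_{\lambda\in\Lambda} N_\lambda$ is again a submodule of $M$, being the union of a chain of submodules; it is contained in $N$ since each $N_\lambda \subseteq N$; and it is an $n$-pure submodule of $M$ by Proposition~\ref{p15.8}(a). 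Hence $\bigcup_{\lambda\in\Lambda} N_\lambda \in \Sigma$ and is an upper bound for the chain inside $\Sigma$.

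Zorn's Lemma now yields a maximal element $K$ of $\Sigma$. By construction $K \subseteq N$, $K$ is an $n$-pure submodule of $M$, and no strictly larger submodule of $N$ with this property exists, which is precisely the assertion of the theorem.

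I do not expect any real obstacle here beyond routine bookkeeping: the only two points that need checking are that $\Sigma$ is nonempty (handled by the $\{0\}$ computation above) and that Proposition~\ref{p15.8}(a) applies verbatim to chains drawn from $\Sigma$, which it does since such a chain is in particular a chain of $n$-pure submodules of $M$. If one wished to avoid invoking Zorn's Lemma, one could instead realize $K$ as a transfinite union of an ascending sequence of $n$-pure submodules contained in $N$, but the Zorn argument is cleanest and matches the $n=2$ case exactly.
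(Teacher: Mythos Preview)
Your proposal is correct and follows exactly the approach of the paper: the paper's own proof simply says to use the technique of Theorem~\ref{p5.9}, which is precisely the Zorn's Lemma argument on $\Sigma=\{H\leq N\mid H\text{ is }n\text{-pure in }M\}$ with $0\in\Sigma$ and upper bounds for chains supplied by Proposition~\ref{p15.8}(a). There is nothing to add.
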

\begin{proof}
Use the technique of Theorem \ref{p5.9}.
\end{proof}

\begin{defn}\label{d13.1}
Let $n$ be a positive integer.
 We say that an $R$-module $M$ is \emph{fully $n$-pure} if every submodule of $M$ is $n$-pure.
\end{defn}

Let $N$ and $K$ be two submodules of $M$. The \emph{product} of $N$ and $K$ is defined by $(N:_RM)(K:_RM)M$  and denoted by $NK$ \cite{AF007}.
\begin{thm}\label{t13.11}
Let $M$ be a multiplication $R$-module and let  $n$ be a positive integer.  Then the following statements are equivalent.
\begin{itemize}
  \item [(a)] For submodules $N_1, N_2,...,N_n$ of $M$, we have $N_1N_2...N_n=N_1N_2 \cap N_1N_3 \cap ...\cap N_1N_n \cap (N_2N_3...N_n)$.
  \item [(b)] $M$ is a fully $n$-pure $R$-module.
\end{itemize}
\end{thm}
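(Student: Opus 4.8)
The plan is to imitate the proof of Theorem~\ref{t3.11} verbatim, replacing binary products by $n$-fold products and $2$-purity by $n$-purity. The only facts needed about a multiplication module $M$ beyond the definitions are the elementary identities $(A_1M)(A_2M)\cdots(A_kM)=(A_1A_2\cdots A_k)M$ for ideals $A_i$ of $R$, and $K(LM)=LK$ for an ideal $L$ of $R$ and a submodule $K$ of $M$ (so in particular $KM=K$, $(LM)(K)=LK$, and, iterating the definition of the product, $N_1N_2\cdots N_k=(N_1:_RM)(N_2:_RM)\cdots(N_k:_RM)M$ for submodules $N_i$). Each is proved by writing $N_i=(N_i:_RM)M$ and a one-line element computation; I would record them first, and then treat the two implications separately.

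For $(a)\Rightarrow(b)$, fix a submodule $N$ of $M$ and proper ideals $I_1,\dots,I_n$ of $R$. The inclusion $I_1\cdots I_nN\subseteq I_1N\cap\cdots\cap I_nN\cap(I_1\cdots I_n)M$ is immediate, so only the reverse inclusion requires work. Using the identities above, rewrite $I_jN=(I_jM)(N)$ for each $j$ and $(I_1\cdots I_n)M=(I_1M)(I_2M)\cdots(I_nM)$, so that
$$I_1N\cap\cdots\cap I_nN\cap(I_1\cdots I_n)M=(I_1M)(N)\cap\cdots\cap(I_nM)(N)\cap\bigl((I_1M)(I_2M)\cdots(I_nM)\bigr),$$
which is exactly of the shape to which hypothesis~(a) applies, with $N$ as the distinguished factor and the $I_jM$ as the remaining factors. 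Thus (a) collapses the intersection to the single product $(I_1M)(I_2M)\cdots(I_nM)(N)$, and by associativity of the product together with the identities above this equals $\bigl((I_1\cdots I_n)M\bigr)(N)=I_1\cdots I_nN$, as wanted. This is precisely the $n$-variable form of the chain of inclusions used for $n=2$ in the proof of Theorem~\ref{t3.11}.

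For $(b)\Rightarrow(a)$, I would start from the right-hand side of the displayed identity in (a) and translate each product into ``ideal times submodule'' form, keeping the submodule common to every pairwise product as the module part: each pairwise product $N_1N_j$ becomes $(N_j:_RM)N_1$, while the remaining long product equals the product of the ideals $(N_i:_RM)$ of its factors times $M$. The resulting intersection is then exactly the right-hand side of the $n$-purity relation for that distinguished submodule with respect to the proper ideals $(N_i:_RM)$ coming from the other submodules. Since $M$ is fully $n$-pure, that submodule is $n$-pure, so the intersection equals the corresponding product of those ideals times it; invoking the identities once more rewrites this as the full $n$-fold product of the $N_i$, which is what (a) asserts.

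The routine-but-essential point is the bookkeeping: one must arrange the combinatorics of (a) so that exactly $n$ proper ideals are fed into the $n$-purity relation (mirroring the role played by the two submodules $N,K$ in the proof of Theorem~\ref{t3.11}), and one must dispose of the degenerate cases in which some $(N_i:_RM)=R$, i.e.\ $N_i=M$: there that factor is simply discarded and the argument reduces to one with fewer submodules. No idea beyond the dictionary between ideals and submodules supplied by the multiplication hypothesis is needed, so I expect the write-up to be short once these translations are set down.
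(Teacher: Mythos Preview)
Your plan is exactly the paper's: its entire proof reads ``Use the technique of Theorem~\ref{t3.11}.'' But there is a counting mismatch you glossed over. In $(a)\Rightarrow(b)$ you feed $n+1$ submodules into hypothesis~(a)---namely $N$ together with $I_1M,\dots,I_nM$---whereas (a) as stated involves only $n$ submodules $N_1,\dots,N_n$ and produces only $n$ terms in the intersection (the $n-1$ pairwise products $N_1N_j$ plus the single tail $N_2\cdots N_n$), one short of the $n+1$ terms $I_1N,\dots,I_nN,(I_1\cdots I_n)M$ that $n$-purity requires. Dually, in $(b)\Rightarrow(a)$ you extract only the $n-1$ ideals $(N_2:_RM),\dots,(N_n:_RM)$ to insert into the $n$-purity relation for $N_1$, but $n$-purity asks for $n$ ideals; what your translation actually produces is the $(n-1)$-purity condition for $N_1$, and $(n-1)$-purity is \emph{not} a consequence of $n$-purity.

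This off-by-one is already present in the paper's formulation: for $n=2$ the identity in~(a) collapses to the tautology $N_1N_2=N_1N_2\cap N_2$ and does not specialise to the three-submodule identity of Theorem~\ref{t3.11}. So the defect lies in the statement rather than in your strategy; with (a) rewritten for $n+1$ submodules $N_1,\dots,N_{n+1}$ (so that the right-hand side carries the $n$ pairwise products $N_1N_j$, $j=2,\dots,n+1$, together with $N_2\cdots N_{n+1}$), both your argument and the paper's intended one go through verbatim.
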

\begin{proof}
Use the technique of Theorem \ref{t3.11}.
\end{proof}

\begin{thm}\label{t13.12}
Let $M$ be a finitely generated faithful multiplication $R$-module,  $N$ be a submodule of $M$, and let $n$ be a positive integer. Then $N$ is a  $n$-pure submodule of $M$ if and only if $(N:_RM)$ is a  $n$-pure ideal of $R$.
\end{thm}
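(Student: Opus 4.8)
The plan is to follow the proof of Theorem~\ref{t3.12} almost verbatim, transferring the whole statement to the ideal $Q := (N:_RM)$. Since $M$ is a multiplication module we have $N = QM$, and more generally $IN = (IQ)M$ for every ideal $I$ of $R$. Beyond this I would only use two standard facts about a finitely generated faithful multiplication module recorded in \cite[3.1]{BS89}: the cancellation property $AM \subseteq BM \Rightarrow A \subseteq B$, and the distributivity $(A_1 \cap \cdots \cap A_k)M = A_1M \cap \cdots \cap A_kM$ for ideals $A_i$ of $R$ (the $k$-fold form of the latter being obtained from the two-fold form by an immediate induction).

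First I would establish the forward implication. Assuming $N$ is an $n$-pure submodule, fix proper ideals $I_1,\dots,I_n$ of $R$ and substitute $N = QM$ and $I_jN = (I_jQ)M$ into $I_1\cdots I_nN = I_1N \cap \cdots \cap I_nN \cap (I_1\cdots I_n)M$, obtaining
$$(I_1\cdots I_nQ)M = (I_1Q)M \cap \cdots \cap (I_nQ)M \cap (I_1\cdots I_n)M .$$
Since $(I_1Q \cap \cdots \cap I_nQ \cap I_1\cdots I_n)M$ is always contained in the right-hand side, this gives $(I_1\cdots I_nQ)M \supseteq (I_1Q \cap \cdots \cap I_nQ \cap I_1\cdots I_n)M$, and cancellation then yields $I_1\cdots I_nQ \supseteq I_1Q \cap \cdots \cap I_nQ \cap I_1\cdots I_n$. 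The reverse inclusion is clear because $I_1\cdots I_nQ$ is contained in each $I_jQ$ (as $I_1\cdots I_n \subseteq I_j$) and in $I_1\cdots I_n$ (as $Q \subseteq R$), so $Q$ is an $n$-pure ideal.

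For the converse I would assume $Q$ is an $n$-pure ideal, fix proper ideals $I_1,\dots,I_n$ of $R$, so that $I_1\cdots I_nQ = I_1Q \cap \cdots \cap I_nQ \cap I_1\cdots I_n$, multiply through by $M$, and use the distributivity above to split the intersection, getting $(I_1\cdots I_nQ)M = (I_1Q)M \cap \cdots \cap (I_nQ)M \cap (I_1\cdots I_n)M$, i.e.\ $I_1\cdots I_nN = I_1N \cap \cdots \cap I_nN \cap (I_1\cdots I_n)M$; hence $N$ is $n$-pure. (When $n = 1$ both conditions hold vacuously, so one may assume $n \ge 2$.) The argument is a verbatim extension of the case $n = 2$, so I do not anticipate a genuine obstacle; the only point worth flagging explicitly is that the two-variable cancellation and intersection-distribution results of \cite[3.1]{BS89} are being applied in their $n$-variable form, which is justified by the easy induction mentioned above.
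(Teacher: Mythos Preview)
Your proposal is correct and follows precisely the route the paper itself indicates: it is the direct $n$-variable transcription of the proof of Theorem~\ref{t3.12}, using $N=(N:_RM)M$ together with the cancellation and intersection-distribution properties from \cite[3.1]{BS89}. Your explicit remark that the $n$-fold forms of those properties follow by an easy induction is a harmless (and helpful) addition.
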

\begin{proof}
Use the technique of  Theorem \ref{t3.12}.
\end{proof}

\textbf{Acknowledgments.} The author would like to thank Prof. Habibollah Ansari-Toroghy for his helpful suggestions and useful comments.

\bibliographystyle{amsplain}

\begin{thebibliography}{10}
\bibitem{AS04}
M.M. Ali and D.J. Smith, \emph{Pure submodules of multiplication modules}, \emph{Beitr\"{a}ge
Algebra Geom.} \textbf{45} (1) (2004) 61--74.

\bibitem{AF74}
W.~Anderson and K.R.~Fuller,  \emph{Rings and Categories of
Modules}, Springer-Verlag, New York-Heidelberg-Berlin, 1974.

\bibitem{AF007}
H.~Ansari-Toroghy and F.~Farshadifar, \emph{Product and dual product of submodules}, Far East J. Math. Sci.,
\textbf{25} (3) (2007), 447--455.

\bibitem{AF105}
H.~Ansari-Toroghy,  F.~Farshadifar, and S. Maleki-Roudposhti, \emph{$n$-absorbing and strongly $n$-absorbing second submodules}, Boletim da Sociedade Paranaense de Matemática, to appear.

\bibitem{Ba81}
A. Barnard, \emph{Multiplication modules}, J. Algebra, \textbf{71} (1981), 174--178.

\bibitem{Co59}
P. M. Cohn, \emph{On the free product of associative rings},  \emph{Math. Z.} {\bf 71} (1959) 380--398.

\bibitem{BS89}
Z.A. El-Bast and P.F. Smith, \emph{Multiplication modules}, Comm. Algebra
\textbf{16} (1988), 755--779.

\bibitem{HE14}
I.M.A. Hadi and A.A. Elewi, \emph{Fully cancellation and naturaly cancellation modules} Journal of Al-Nahrain Univ., \textbf{17} (3) (2014), 178-184.

\bibitem{La99}
T. Y. Lam, \emph{Lectures on Modules and Rings}. Springer 1999.

\bibitem{Ri72}
P. Ribenboim,  \emph{Algebraic Numbers}. Wiley 1972.

\bibitem{sh90}
R. Y. Sharp, \emph{Step in commutative algebra}, Cambridge University Press, 1990.
\end{thebibliography}

\end{document}